\documentclass[10pt, a4paper]{article}
\usepackage{amssymb}
\usepackage{amsmath}
\usepackage{amsfonts}
\usepackage{amscd}
\usepackage{geometry}

\newcommand{\C}{\mathbb{C}}
\newcommand{\R}{\mathbb{R}}
\newcommand{\Q}{\mathbb{Q}}
\newcommand{\N}{\mathbb{N}}
\newcommand{\Z}{\mathbb{Z}}
\newcommand{\A}{\mathbb{A}}

\newcommand{\F}{\mathbb{F}}
\newcommand{\n}{\mathfrak{n}}
\newcommand{\p}{\mathfrak{p}}
\newcommand{\q}{\mathfrak{q}}

\DeclareMathOperator{\proj}{proj}
\DeclareMathOperator{\Ind}{Ind}
\DeclareMathOperator{\Frob}{Frob}
\DeclareMathOperator{\GL}{GL}
\DeclareMathOperator{\Tr}{Tr}
\DeclareMathOperator{\PGL}{PGL}
\DeclareMathOperator{\PSL}{PSL}
\DeclareMathOperator{\Gal}{Gal}

\DeclareMathOperator{\SL}{SL}

\usepackage{amsthm}
\theoremstyle{plain}
\newtheorem{theo}{Theorem}[section]
\newtheorem{prop}[theo]{Proposition}
\newtheorem{lemm}[theo]{Lemma}
\newtheorem{coro}[theo]{Corollary}
\newtheorem*{tio}{Theorem}
\theoremstyle{remark}
\newtheorem{rema}[theo]{\sc Remark}
\theoremstyle{definition}
\newtheorem{defi}[theo]{Definition}

\title{Constructing  Hilbert modular forms whithout exceptional primes}
\author{\small LUIS V. DIEULEFAIT \footnote{Departament d'Algebra i Geometria,
 Facultat de Matem$\grave{\mbox{a}}$tiques, Universitat de Barcelona,
  Gran Via de les Corts Catalanes, 585, 08007 Barcelona, Spain, \texttt{ldieulefait@ub.edu}},
   ADRI$\acute{\mbox{A}}$N ZENTENO-GUTI$\acute{\mbox{E}}$RREZ  \footnote{Instituto de Matem$\acute{\mbox{a}}$ticas,
    (Unidad Cuernavaca) UNAM. Av. Universidad s/n. Col. Lomas de Chamilpa CP 62210,
     Cuernavaca, Mexico. \texttt{matematicazg@ciencias.unam.mx}}}

\date{\today}
\begin{document}

\maketitle

\begin{abstract}
In this paper we construct families of Hilbert modular newforms without exceptional primes. This is achieved by generalizing the notion of good-dihedral primes, introduced by Khare and Wintenberger in their proof of Serre's modularity conjecture, to totally real fields.

2010 \textit{Mathematics Subject Classification}. 11F80, 11F41.
\end{abstract}

\section*{Introduction}
\label{intro}
In order to prove new cases of the Inverse Galois Problem, Dieulefait and Wiese \cite{diu} have constructed families of newforms without exceptional primes by using the notion of tamely dihedral representation, which is a slight variation of the  good-dihedral representations introduced by Khare and Wintenberger in \cite{kw1}. More precisely they prove the following result.

\begin{tio}
There exist eigenforms $(f_n)_{n \in \N}$ with  coefficient fields $(E_{f_n})_{n\in \N}$ of weight 2,
 trivial nebentypus, without nontrivial inner twists and without complex multiplication such that
\begin{enumerate}
\item for all $n$ and all maximal ideals $\Lambda_n$ of $\mathcal{O}_{E_{f_n}}$ the Galois representation
 $\overline{\rho}^{\proj}_{f_n, \Lambda_n}$ associated to $f_n$ is nonexceptional in the sense
  of Definition \ref{exep} and
\item for fixed prime $\ell$, the size of the image
of $\overline{\rho}^{\proj}_{f_n, \Lambda_n}$ for $\Lambda_n | \ell$ is unbounded for running $n$.
\end{enumerate}
\end{tio}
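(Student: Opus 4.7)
The plan is to realize each $f_n$ as a modular lift, via the Khare--Wintenberger theorem, of a carefully tailored residual Galois representation equipped with tamely dihedral (good-dihedral) local behavior at a growing auxiliary set of primes. Fix a prime $\ell$ at which the unboundedness statement will eventually be tested. For each $n$ I would first construct a residual representation $\overline{\rho}_n : G_\Q \to \GL_2(\overline{\F}_\ell)$ with large irreducible image and, at auxiliary primes $q_1, \ldots, q_n$ (with $q_i \to \infty$), with prescribed tamely dihedral local behavior in the sense of \cite{kw1}: the restriction to $G_{\Q_{q_i}}$ is induced from a character of the unramified quadratic extension of order divisible by a large prime $t_i$ chosen far from the other auxiliary data. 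By construction this local shape survives any lift, so the compatible system attached to a characteristic-zero lift will exhibit the same dihedral local image at $q_i$ at every residual characteristic $p \notin \{q_i, t_i\}$.

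Next I would invoke Serre's modularity conjecture (Khare--Wintenberger) to produce a newform $f_n$ lifting $\overline{\rho}_n$. Choosing the initial data with trivial nebentypus, odd determinant and suitable Hodge--Tate weights ensures $f_n$ can be taken of weight $2$ with trivial nebentypus. The tamely dihedral conditions at several independent primes $q_i$ force the associated automorphic representation to be supercuspidal of specific inertial type at each $q_i$; a global CM or nontrivial inner twist would require a single quadratic character to be compatible with the local dihedral shape at every $q_i$ simultaneously, which is ruled out by choosing the quadratic fields cut out by consecutive local inductions to be pairwise distinct.

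The core of the argument is to verify that $\overline{\rho}^{proj}_{f_n, \Lambda_n}$ is non-exceptional at \emph{every} maximal ideal $\Lambda_n$ of $\mathcal{O}_{E_{f_n}}$, not just at those above $\ell$. By the Dickson classification, exceptionality means the projective image is cyclic, dihedral, or isomorphic to $A_4$, $S_4$, or $A_5$. Cyclic is excluded by irreducibility of $\overline{\rho}_n$; dihedral would force an inner twist or CM, ruled out above; and $A_4$, $S_4$, $A_5$ have order at most $120$. But the tamely dihedral shape at $q_i$, preserved under reduction modulo any $\Lambda_n$ with residue characteristic outside $\{q_i, t_i\}$, contributes to the projective image an element of order $t_i / \gcd(t_i, 2)$. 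Choosing the $q_i$ so that at every residue characteristic $p$ arising in the coefficient field at least one auxiliary prime $q_i$ satisfies $p \notin \{q_i, t_i\}$ and $t_i > 240$ rules out the sporadic cases at every $\Lambda_n$.

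Finally, the unboundedness at a fixed $\ell$ follows because, as $n$ grows, one adds further auxiliary primes with larger $t_i$, and for each $\Lambda_n \mid \ell$ with $\ell \notin \{q_i, t_i\}$ the projective image must contain an element of order at least $t_i/2$. The main obstacle, as I see it, is the combinatorial bookkeeping required to orchestrate the pairs $(q_i, t_i)$ so that non-exceptionality holds \emph{uniformly} across all residual characteristics of $f_n$ while remaining compatible with the local liftability hypotheses needed to apply Khare--Wintenberger; this parallels the strategy of \cite{diu} and relies crucially on the fact that the good-dihedral local shape is rigid under reduction at almost all residual characteristics.
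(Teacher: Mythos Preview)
Your proposal takes a genuinely different route from the paper (and from \cite{diu}, which is what the paper is quoting here): you want to build a residual representation $\overline{\rho}_n$ first and then invoke Serre's conjecture, whereas the actual construction in \cite{diu} (mirrored in Theorem~\ref{chido} of this paper for the Hilbert setting) starts from an existing newform of squarefree level and \emph{adds} tamely dihedral primes one at a time via level raising (Theorem~\ref{lev}, Proposition~\ref{54}). This distinction matters, because your first step is the one that is not justified: you assert the existence of $\overline{\rho}_n:G_\Q\to\GL_2(\overline{\F}_\ell)$ with large image and prescribed supercuspidal local shape at an arbitrary collection of primes $q_1,\dots,q_n$, but you give no mechanism for producing such a global residual representation. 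The standard way to manufacture exactly this kind of object is precisely the level raising procedure you are bypassing, so the argument is effectively circular.

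There is a second, more specific gap in your exclusion of the dihedral case. You write that a dihedral projective image ``would force an inner twist or CM, ruled out above,'' but this conflates two different phenomena. Ruling out CM and nontrivial inner twists of the \emph{newform} (a statement about the compatible system) is done in the paper via Corollary~\ref{46}, using only the shape of the level. Ruling out dihedral image of a \emph{single residual} $\overline{\rho}^{proj}_{f,\Lambda}$ is a separate matter: the residual representation can in principle be dihedral at some $\Lambda$ without the form having CM. The paper (Proposition~\ref{61}) handles this with a local--global argument requiring \emph{two} tamely dihedral primes $\q,\mathfrak{u}$ chosen with explicit mutual splitting conditions in certain polyquadratic fields $F(\mathfrak{u},\p_1,\dots,\p_m)$ and $F(\q,\p_1,\dots,\p_m)$; these conditions force the quadratic field cut out by any putative dihedral image to split $\q$ (or $\mathfrak{u}$), contradicting local irreducibility at $D_\q$ (or $D_{\mathfrak{u}}$). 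Your proposal contains no analogue of these splitting constraints, and ``choosing the quadratic fields cut out by consecutive local inductions to be pairwise distinct'' does not address the issue, since those are local unramified quadratic extensions of $\Q_{q_i}$, not the global quadratic field $K$ that a dihedral projective image would produce.
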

In this paper we extend this construction to Hilbert modular newforms of arithmetic weight $k$ over a totally real field $F$, defining the notion of tamely dihedral representation in $F$.

Our construction closely follows the construction of Dieulefait and Wiese which consists of adding tamely dihedral primes to the level, via level raising, which correspond to supercuspidal representations. By using a Lemma of Dimitrov we are going to be able to construct Hilbert modular newforms of arbitrary
weight, not only of weight $2$ as in \cite{diu}. On the other hand, if $F$ is a Galois field of odd degree, we will add an extra ingredient  to our construction in order to avoid the possibility that the Hilbert modular newforms considered come from a base change. This phenomenon does not occur in the classical case.

We itemize the contents of the paper. In the first two sections we recall the notion of inner twist and complex multiplication for $2$-dimensional Galois representations. In section 3 we review essential facts about the correspondence between Galois
representations and Hilbert modular forms. In section 4 we define the notion of tamely dihedral representations and show some basic lemmas that will later be used in order to exclude nontrivial inner
twists. In section 5 we collect the tools for constructing the sought for Hilbert modular newforms via level raising. Finally, in the last two sections we construct families of Hilbert modular newforms without exceptional primes.

\paragraph*{Acknowledgements} 
This paper is part of the second author's PhD thesis.
Part of this work has been written during a stay at the Hausdorff Research Institute for Mathematics and a stay, of the second author, at the Mathematical Institute of the University of Barcelona. The authors would like to thank these institutions for their
support and the optimal working conditions. The research of L.V.D. was supported by an ICREA Academia Research Prize and by MICINN  grant MTM2012-33830. The
research of A.Z. was supported by the CONACYT grant no. 432521/286915.

\paragraph*{Notation}
We shall use the following notations. Let $K$ be a number field or a $\ell$-adic field. We denote by $\mathcal{O}_K$ its ring of integers. In particular if $K$ is a number field, $\mathfrak{d}_K$ denote the different of $K$, $h_K$ denote the class number of $K$ and by a prime of $K$ we mean a nonzero prime ideal of $\mathcal{O}_K$. We denote by $G_K$ the absolute Galois group of $K$. For a maximal ideal $\q$ of $\mathcal{O}_K$, we let $D_{\q}$ and $I_{\q}$ be the corresponding decomposition and inertia group at $\q$, respectively. By $S_k(\n,\psi)$ we means the space of Hilbert modular cuspforms of weight $k$, level $\n$ and central character $\psi$. If $\psi$ is trivial, we write $S_k(\n)$ for short.  Finally for a number field $K$ we denote by $K(\mathfrak{\p}_1, \ldots , \p_m)$ the maximal abelian polyquadratic extension which is ramified only at the primes $\p_1,
\ldots, \p_m$. By the Hermite-Minkowski theorem $K(\mathfrak{\p}_1, \ldots , \p_m)$ is a number field.


\section{Inner twists and complex multiplication}
\label{sec:1}

In this and the next section we review some facts on inner twists and complex multiplication for $2$-dimensional Galois representations. Our main reference is \cite{adw}.

Let $K$ be an $\ell$-adic field with the $\ell$-adic topology or a finite field with the discrete topology and $L/K$ a finite Galois extension with Galois group $\Gamma := \Gal(L/K)$ endowed with the Krull topology.
Let $F$ be a totally real field and $\mathcal{E} = \{ \epsilon :G_F \rightarrow L^{\times}\}$ be the set of continuous characters from $G_F$ to $L^{\times}$.
Note that our assumptions imply that the image of  $\epsilon$ lies in a finite extension of $K$ and that $\Gamma$ acts on $\mathcal{E}$ on the left by  composition: $^{\gamma}\epsilon := \gamma \circ \epsilon$. Then, we can form the semi-direct product $ \mathcal{G}: = \mathcal{E} \rtimes \Gamma $ induced by this action. Concretely, the product and inverse in $ \mathcal{G} $ are defined as:
\[
(\gamma_1, \epsilon_1) \cdot (\gamma_2, \epsilon_2) := (\gamma_1 \gamma_2 , (^{\gamma_1} \epsilon_2)\epsilon_1)  \quad \mbox{and}
 \quad (\gamma, \epsilon)^{-1}:= (\gamma^{-1}, ^{\gamma^{-1}}(\epsilon^{-1})).
\]
Consequently, we have the exact sequence:
\[
1 \rightarrow \mathcal{E} \stackrel{i}{\rightarrow} \mathcal{G} \stackrel{\pi}{\rightarrow} \Gamma \rightarrow 1
\]
where $i$ (resp. $\pi$) is defined as $\epsilon \mapsto (1, \epsilon)$ (resp. $(\gamma, \epsilon) \mapsto \gamma$).

In this paper we will consider only $2$-dimensional Galois representations $\rho :G_F \rightarrow \GL_2(L)$, however the next results of Arias-de-Reyna, Dieulefait and Wiese are valid for arbitrary $n$-dimensional Galois representations.

Let $\rho, \rho' :G_F \rightarrow \GL_2(L)$ be Galois representations. We say that $\rho$ and $\rho '$ are equivalent (denoted $\rho \sim \rho'$) if they are conjugate by an element of $\GL_2(L)$.
We denote the set of equivalence classes by $\mathcal{GL}_2(G_F,L)$ and note that $\mathcal{G}$ acts on $\mathcal{GL}_2(G_F,L)$ from the left as follows:
\[
(\gamma, \epsilon)\cdot [\rho] := [(^{\gamma}\rho) \otimes_L \epsilon^{-1}],
\]
where $\gamma \in \Gamma$, $\epsilon\in \mathcal{E}$ and $[\rho] \in \mathcal{GL}_2(G_F,L)$.

Let $[\rho]\in \mathcal{GL}_2(G_F,L)$.
Define $ \mathcal{G}_{[\rho]}$ to be the stabilizer group of $[\rho]$ in $\mathcal{G}$ under the $\mathcal{G}$-action on $\mathcal{GL}_2(G_F,L)$.
This group is called the \emph{inner twists group of} $[\rho]$. Explicitly,  $(\gamma, \epsilon) \in \mathcal{G}$ is an \emph{inner twist of} $[\rho]$ if and only if $[\rho]=[(^{\gamma} \rho) \otimes_L \epsilon^{-1}]$, which is the case if and only if
\[
[^{\gamma} \rho] = [\rho \otimes_L \epsilon].
\]
In particular, if $[\rho], [\rho '] \in \mathcal{GL}_2(G_F,L)$ are absolutely irreducible and such that $\Tr(\rho(g)) = \Tr(\rho'(g))$ for all $g\in G_F$, then $[\rho]=[\rho']$. Therefore, we have for all $[\rho] \in \mathcal{GL}_2(G_F,L)$ absolutely irreducible that
\[
 \mathcal{G}_{[\rho]} = \{ (\gamma, \epsilon) \in \mathcal{G} : \gamma(\Tr(\rho(g))) =  \Tr(\rho(g))\epsilon(g), \; \forall g \in G_F \}.
\]
We define the groups:
\[
\Gamma_{[\rho]}:= \pi(\mathcal{G}_{[\rho]}) \subseteq \Gamma, \quad \mbox{and} \quad \mathcal{E}_{[\rho]}:= i^{-1}(\mathcal{G_{[\rho]}}) = i^{-1}(\ker(\pi | _{\mathcal{G}_{[\rho]}})).
\]

We define the group $ \Delta_{[\rho]} := \{ \gamma \in \Gamma_{[\rho]} : (\gamma, 1) \in \mathcal{G}_{[\rho]} \}$ and the field $E_{[\rho]} := L^{\Delta_{[\rho]}}$.
Let $[\rho] \in \mathcal{GL}_2(G_F, L)$ be (residually) absolutely irreducible.
Let $\chi : G_F \rightarrow K^{\times}$ be any character and $\psi: G_F \rightarrow L^{\times}$ be a character of finite order.
Assume that $\det \rho = \psi \chi$ and $E_{\rho}$ contains the square roots of the values of $\psi$.
Then the equivalence class $[\rho]$ contains a representation that can be defined over the field $E_{[\rho]}$ and $E_{[\rho]}$ is the smallest such subfield of $L$.
Moreover, $E_{[\rho]}$ is generated over $K$ by the traces $\Tr(\rho(g))$ for $g\in G_F$.
Consequently, $E_{[\rho]}$ is called the \emph{field of definition of }$[\rho]$.

Now, let $\rho_1, \rho_2 : G_F \rightarrow \PGL_2(L)$ be projective representations.
We call $\rho_1$ and $\rho_2$ equivalent (also denoted $\rho_1 \sim \rho_2$) if they are conjugate by the class (modulo scalars) of a matrix in $\GL_2(L)$. The equivalence classes of $\rho_i$ is also denoted $[\rho_i]$ and the set of such equivalence class is denoted by $\mathcal{PGL}_2(G_F,L)$.
In particular for $\rho: G_F \rightarrow \GL_2(L)$ we denote by $\rho^{\proj}$ the composition of $\rho$ with the natural projection $\GL_2(L) \rightarrow \PGL_2(L)$.

For $[\rho] \in \mathcal{GL}_2(G_F,L)$ and $\epsilon \in \mathcal{E}$ one has  $\rho^{\proj} \sim (\rho \otimes \epsilon)^{\proj}$. Conversely, if $[\rho_1], [\rho_2] \in \mathcal{GL}_2(G_F,L)$ are such that $\rho^{\proj}_1 \sim \rho^{\proj}_2$, then there is $\epsilon \in \mathcal{E}$ such that $[\rho_1]\sim [\rho_2 \otimes \epsilon]$. Thus we have that
\[
\Gamma_{[\rho]} = \{ \gamma \in \Gamma : \; ^{\gamma}\rho^{\proj} \sim \rho^{\proj} \}
\]
Define the field $K_{[\rho]} = L^{\Gamma_{[\rho]}}$.
If $\rho^{\proj}$ factors as $G_F \rightarrow \PGL_2(\tilde{K}) \rightarrow \PGL_2(L)$ for some field $K \subseteq \tilde{K} \subseteq L$, then $K_{[\rho]} \subseteq \tilde{K}$. Moreover, if $[\rho]$ is such that its restriction to the subgroup
\[
I_{[\rho]}:= \bigcap_{ \{\epsilon \in \mathcal{E} : \exists (\gamma,\epsilon) \in
\mathcal{G}_{[\rho]} \} }\ker (\epsilon)
\]
is (residually) absolutely irreducible (in particular, this implies that $[\rho]$ has no complex multiplication), then the equivalence class of $\rho^{\proj}$ has a member that factors through
$\PGL_2(K_{[\rho]})$ and  $K_{[\rho]}$ is the smallest subfield of $L$ with this property.
Consequently $K_{[\rho]}$ is called the \emph{ projective field of definition of }$[\rho]$.

\begin{rema}
Note that if $E_{[\rho]}$ contains the square roots of $\psi$, then $\Delta_{[\rho]}$ is an open normal
 subgroup of $\Gamma_{[\rho]}$ and, hence, $E_{[\rho]}/K_{[\rho]}$ is a finite extension with Galois
  group $\Gamma_{[\rho]} / \Delta_{[\rho]}$. In particular, if $[\rho]$ does not have any nontrivial
   inner twist and no complex multiplication, $L= E_{[\rho]} = K_{[\rho]}$.
\end{rema}

Finally, we will give a couple of lemmas similar to Proposition 3.3 of \cite{adw}.

\begin{lemm}\label{mio}
Let $K$ be a finite field of characteristic $\ell$ and $[\rho] \in \mathcal{GL}_2(G_F,L)$. Let $\mathfrak{L}$ be a prime of $F$ above $\ell$ and $h$, $t$ two integers. Suppose that
\[
(\rho \otimes \chi_{\ell}^t) | _{I_{\mathfrak{L}}} \simeq \left(
  \begin{array}{cc}
    \varphi_{2h}^b & * \\
    0      & \varphi_{2h}^{b\ell^h} \\
  \end{array}
\right)
\]
where $\varphi_{2h}$ is a fundamental character of niveau $2h$, $\chi_{\ell}$ is the mod-$\ell$ cyclotomic character and $b = a_0+a_1 \ell + \ldots + a_{2h-1} \ell^{2h-1}$ is such that $0 \leq a_i < \frac{\ell-1}{4}$ and $a_0+a_h = \ldots = a_{h-1}+a_{2h-1}$. Then the character $\epsilon$ is unramified at $\mathfrak{L}$ for all $(\gamma,\epsilon) \in \mathcal{G}_{[\rho]}$.
\end{lemm}
\begin{proof}
Note that the restriction to $I_{\mathfrak{L}}$ of the determinant of $\rho$ is $\chi_{\ell}^{a-t}$, where $a=a_i + a_{h+i}$ ($i= 0, \cdots, h-1$).
We know that any exponent $x$ of $\varphi_{2h}$ is of the form 
\[
\sum_{j=1}^{2h}a^{(j)}\ell^{j-1},
\] 
where $\{ a^{(j)} : j = 1,2, \ldots, 2h-1 \}$ is a cyclic permutation of the elements of $S=\{a_0, \ldots, a_{2h-1}\}$. Then we have the following estimate for  $x$:
\begin{equation}\label{cot}
0 \leq x = \sum_{j=1}^{2h}a^{(j)}\ell^{j-1} < \sum_{j=1}^{2h}\frac{\ell -1}{4}\ell^{j-1} = \frac{\ell^{2h}-1}{4}.
\end{equation}

Let $(\gamma, \epsilon) \in \mathcal{G}_{[\rho]}$.  As $K$ is a finite field, $\gamma$ acts by raising to the $\ell^c$-th power for some $c$. In particular $^{\gamma} \varphi_{2h} = \varphi_{2h}^{\ell^c}$. This shows that $^{\gamma}(\rho \otimes \chi_{\ell}^t)|_{I_{\mathfrak{L}}}$ has the same shape as $(\rho \otimes \chi_{\ell}^t)|_{I_{\mathfrak{L}}}$ except that the elements of $S$ are permuted.
Taking the determinant on both sides of $^{\gamma}\rho \cong \rho \otimes \epsilon$ yields that $\epsilon |_{I_{\mathfrak{L}}}$ has order dividing $2$, as $\gamma$ acts trivially on the cyclotomic character.
Moreover, looking at any diagonal entry we get
\begin{equation}\label{peq}
\varphi_{2h}^x = \varphi_{2h}^y \cdot \epsilon |_{I_{\mathfrak{L}}}
\end{equation}
for some exponents $x$ and $y$. 

If we assume that the order of $\epsilon|_{I_{\mathfrak{L}}}$ is $2$, then we have 
\[
\epsilon|_{I_{\mathfrak{L}}} = \varphi_{2h}^{\frac{\ell^{2h}-1}{2}}.
\]
But, as the order of $\varphi_{2h}$ is $\ell^{2h}-1$,  equation (\ref{peq}) implies that $\frac{\ell^{2h}-1}{2} +y -x$ is divisible by  $\ell^{2h}-1$. Thus we get a contradiction because $y$ and $x$ are less than $\frac{\ell^{2h}-1}{4}$ by (\ref{cot}). Therefore $\epsilon|_{I_{\mathfrak{L}}}$ is trivial, then unramified at $\mathfrak{L}$. 
\end{proof}

\begin{lemm}\label{mio2}
Let $K$ be a finite field of characteristic $\ell$ and $[\rho] \in \mathcal{GL}_2(G_F,L)$. Let $\mathfrak{L}$ be a prime of $F$ above $\ell$ and $h$, $t$ two integers. Suppose that
\[
(\rho \otimes \chi_{\ell}^t)| _{I_{\mathfrak{L}}} \simeq
\left(
  \begin{array}{cc}
    \varphi_{h}^a & * \\
    0      & \varphi_{h}^{b} \\
  \end{array}
\right)
\]
where $\varphi_{h}$ is a fundamental character of niveau $h$, $\chi_{\ell}$ is the mod-$\ell$ cyclotomic character and $a$, $b$ are of the form $a = a_0+a_1 \ell + \ldots + a_{h-1} \ell^{h-1}$ and $b = a_h+a_{h+1} \ell + \ldots + a_{2h-1} \ell^{h-1}$ with $0 \leq a_i < \frac{\ell-1}{4}$ and  $a_0+a_h = \ldots = a_{h-1}+a_{2h-1}$. Then the character $\epsilon$ is unramified at $\mathfrak{L}$ for all $(\gamma,\epsilon) \in \mathcal{G}_{[\rho]}$.
\end{lemm}
\begin{proof}
The proof is analogous to the proof of Lemma \ref{mio}. 
\end{proof}

We will say that the representations in Lemma \ref{mio} and Lemma \ref{mio2} have \emph{tame inertia weights at most }$k$ if $a_i \leq k$ for all $i$.


\section{Compatible systems of Galois representations}
\label{sec:2}

Let $L/K$ be a finite Galois extension of number fields, $F$ be a totally real field and $S$  be a finite set of primes of $F$.
A \emph{compatible system} $\rho_{\bullet} = (\rho_{\Lambda})_{\Lambda}$\emph{ of }$2$-\emph{dimensional representation of} $G_F$ consists of the following data:
\begin{itemize}
\item For each prime $\p$ of $F$ not in $S$, a monic polynomial $P_{\p} \in \mathcal{O}_L[X]$.
\item For each prime $\Lambda$ of $L$ (together with fixed embeddings $L \hookrightarrow L_{\Lambda} \hookrightarrow \overline{L}_{\Lambda}$) a continuous Galois representation
\[
\rho_{\Lambda}:G_F \rightarrow \GL_2(\overline{L}_{\Lambda})
\]
such that $\rho_{\Lambda}$ is unramified outside $S \cup S_{\ell}$ (where $\ell$ is the residual characteristic of $\Lambda$ and $S_{\ell}$ is the set of primes of $F$ lying above $\ell$) and such that for all $\p \notin S \cup S_{\ell}$ the characteristic polynomial of $\rho_{\Lambda}(\Frob_{\p})$ is equal to $P_{\p}$ (inside $\overline{L}_{\Lambda}[X]$).
\end{itemize}

Let $a \in \Z$ and $\psi: G_F \rightarrow L^{\times}$ be a continuous finite order character. We say that the compatible system $\rho_{\bullet}=(\rho_{\Lambda})_{\Lambda}$ has \emph{determinant} $\psi \chi^a_{\ell}$ if for all primes $\Lambda$ of $L$ above a rational prime $\ell$, for all primes $\ell$, the determinant of $\rho_{\Lambda}$ is $\psi \chi^a_{\ell}$ with $\chi_{\ell}$ the $\ell$-adic cyclotomic character.

Henceforth, we will assume that $\rho_{\bullet}=(\rho_{\Lambda})_{\Lambda}$ is \emph{almost everywhere absolutely irreducible}, i.e. all its members $\rho_{\Lambda}$ are absolutely irreducible except for finitely many primes $\Lambda$ of $L$. Additionally, we assume that $\rho_{\bullet}$ has determinant $\psi \chi^a_{\ell}$.

Note that if we have a number field $K$ with the discrete topology and $L/K$ a finite Galois extension we can define $\mathcal{E}$, $\Gamma$ and $\mathcal{G}$ in the same way as in Section \ref{sec:1} for $\ell$-adic or finite fields.
For a prime $\p$ of $F$ not in $S$, denote by $a_{\p}$ the coefficient in front of $X$ of $P_{\p}$. We define
\[
\mathcal{G}_{\rho_{\bullet}} := \{ (\gamma, \epsilon) \in \mathcal{G} :
 \gamma(a_{\p}) = a_{\p}\cdot \epsilon(\Frob_{\p}),\; \forall \p \notin S \},
\]
\[
\Gamma_{\rho_{\bullet}} := \pi (\mathcal{G}_{\rho_{\bullet}}) \subseteq
 \Gamma, \quad  \mathcal{E}_{\rho_{\bullet}}:=i^{-1}(\mathcal{G}_{\rho_{\bullet}}) =
  i^{-1}(\ker(\pi|_{\mathcal{G}_{\rho_{\bullet}}}))
\]
and
\[
\Delta_{\rho_{\bullet}}:= \{ \gamma \in \Gamma_{\rho_{\bullet}}: (\gamma, 1) \in \mathcal{G}_{\rho_{\bullet}} \}.
\]
We say that the compatible system $\rho_{\bullet}$ has  \emph{no complex multiplication} if $\mathcal{E}_{\rho_{\bullet}} = \{ 1 \}$.
The field $E_{\rho_{\bullet}}:= L^{\Delta_{\rho_{\bullet}}}$ (resp. $K_{\rho_{\bullet}}:=  L^{\Gamma_{\rho_{\bullet}}}$) is called the \emph{field of definition} (resp. \emph{projective field of definition}) of $\rho_{\bullet}$.

If $E_{\rho_{\bullet}}$ contains the square roots of the values of $\psi$ one has that $\Delta_{\rho_{\bullet}}$ is a normal subgroup of
$\Gamma_{\rho_{\bullet}}$, hence $E_{\rho_{\bullet}}/K_{\rho_{\bullet}}$ is a Galois extension with Galois group    $\Gamma_{\rho_{\bullet}}/\Delta_{\rho_{\bullet}}$. In particular, $\gamma(E_{\rho_{\bullet}}) = E_{\rho_{\bullet}}$ for all $\gamma \in \Gamma_{\rho_{\bullet}}$.

\begin{prop}
Let $\rho_{\bullet} = (\rho_{\Lambda})_{\Lambda}$ be a compatible system and assume that $E_{\rho_{\bullet}}$ contains the square roots of the values of $\psi$.
Then for each prime $\Lambda$ of $L$ such that $\rho_{\Lambda}$ is residually absolutely irreducible, the equivalence class $[\rho_{\Lambda}]$ contains a representation that can be defined over the field  $(E_{\rho_{\bullet}})_{\Lambda}$ and $(E_{\rho_{\bullet}})_{\Lambda}$ is the smallest such field. Moreover, $E_{\rho_{\bullet}}$ is generated
over $K$ by the set $\{ a_{\p} :\p \;prime \; of \; $F$ \;not \;in \; S\}$.
\end{prop}
\begin{proof}
This is just Proposition 4.3.b of \cite{adw} with $n=2$. 
\end{proof}

Let $\rho_{\bullet} = (\rho_{\Lambda})_{\Lambda} $ be a compatible system.
For each prime $\Lambda$ of $L$ (resp. $\lambda$ of $K$ below $\Lambda$) we denote by $L_{\Lambda}$ (resp. $K_{\lambda}$) the completion of $L$ (resp. $K$) at $\Lambda$ (resp. at $\lambda$).
Consider the Galois extension $L_{\Lambda}/K_{\lambda}$ and define
$\Gamma_{\Lambda} := \Gal(L_{\Lambda}/K_{\lambda})$,
$\mathcal{E}_{\Lambda} := \{ \epsilon: G_F \rightarrow L_{\Lambda}^{\times} \}$ the set of continuous characters from $G_F$ to  $L_{\Lambda}^{\times} $ and
 $\mathcal{G}_{\Lambda}:= \mathcal{E}_{\Lambda} \rtimes \Gamma_{\Lambda}$.
On the other hand, we know that for the equivalence class $\rho_{\Lambda}$ the stabilizer group  $\mathcal{G}_{[\rho_{\Lambda}]}$ of $[\rho_{\Lambda}]$ is of the form $\mathcal{G}_{[\rho_{\Lambda}]}= \mathcal{E}_{[\rho_{\Lambda}]} \rtimes \Gamma_{[\rho_{\Lambda}]}$.

\begin{prop}
Let $\rho_{\bullet}=(\rho_{\Lambda})_{\Lambda}$ be a compatible system and assume that $L$ contains the square roots of the values of $\psi$. Then for each prime $\Lambda$ of $L$ such that $\rho_{\Lambda}$ is residually absolutely irreducible, the projective field of definition of $[\rho_{\Lambda}]$ is the completion of $K_{\rho_{\bullet}}$ at the prime $\lambda$\footnote{Note that in this case $\lambda$ denotes a prime of $K_{\rho_{\bullet}}$ and not of $K$ as above.} below $\Lambda$, i.e. $K_{[\rho_{\Lambda}]} = (K_{\rho_{\bullet}})_{\lambda}$.
\end{prop}
\begin{proof}
This is Theorem 4.5 of \cite{adw}. 
\end{proof}

Given a compatible system $\rho_{\bullet} = (\rho_{\Lambda})_{\Lambda}$ we can also  talk about the residual representations $\overline{\rho}_{\Lambda}$. If $M$ is a local field, we denote by $\kappa(M)$ its residue field. Let $\Lambda$ be a prime of $L$ and assume that $\rho_{\Lambda}$ is defined over $L_{\Lambda}$. We consider the Galois extension $\kappa(L_{\Lambda})/ \kappa(K_{\lambda})$ with Galois group
$\overline{\Gamma}_{\Lambda}$. Moreover, for the equivalence class $[\overline{\rho}_{\Lambda}]$ of the residual representation $\overline{\rho}_{\Lambda}$ we can define $\mathcal{G}_{[\overline{\rho}_{\Lambda}]}$, $\mathcal{E}_{[\overline{\rho}_{\Lambda}]}$ and $\Gamma_{[\overline{\rho}_{\Lambda}]}$.

\begin{prop}\label{chi}
Let $\rho_{\bullet}=(\rho_{\Lambda})_{\Lambda}$ be a compatible system and assume that $L$ contains the square roots of the values of $\psi$. Assume that the restriction to the inertia group $I_{\mathfrak{L}}$ of $[\overline{\rho}_{\Lambda}]$ for the primes $\mathfrak{L}$ of $F$ lying over the residual characteristic of $\Lambda$ is  as in Lemma $\ref{mio}$ or Lemma $\ref{mio2}$. Moreover, assume that there is an integer $k$, independent of $\Lambda$, such that the representations have tame inertia weights at most $k$. Then for all primes $\Lambda$ of $L$, except possibly finitely many, the projective field of definition of  $[\overline{\rho}_{\Lambda}]$ is  $\kappa((K_{\rho_{\bullet}})_{\lambda})$.
\end{prop}
\begin{proof}
The proof is analogous to the proof of theorem 4.6 of \cite{adw} if we replace Proposition 3.3 of \cite{adw} by Lemma $\ref{mio}$ and Lemma $\ref{mio2}$. 
\end{proof}

\begin{rema}
Note that $\psi$ has finite order is a condition needed to ensure that all $\epsilon$ occurring in the inner twists are of finite order and the condition on the square roots of the values of $\psi$ ensure that $\epsilon$ take its values in $E_{\rho_{\bullet}}$. Moreover, the absolute irreducibility condition is needed to ensure that the representations are determined by the characteristic polynomials of Frobenius and the condition on the shape above $\ell$ is needed to exclude that the residual inner twists ramify at $\ell$.
\end{rema}


\section{Galois representations and Hilbert modular forms}
\label{sec:3}

Let $F$ be a totally real field of degree $d$. Denote by $J_F$ the set of all embeddings of $F$ into  $\overline{\Q} \subseteq \C$. An element $k=\sum_{\sigma \in J_F} k_{\sigma} \sigma \in \Z [ J_F ]$ is called a \emph{weight}. We always assume that the $k_{\sigma}$ have the same parity and are all $\geq 2$.
We put $k_0:= \max \{ k_{\sigma} : \sigma \in J_F \} $.
Let $\n$ be an ideal of $\mathcal{O}_{F}$ and $\psi$ be a Hecke character of conductor dividing $\n$ with infinity type $2-k_0$. Consider a Hilbert modular newform $f \in S_k(\n, \psi)$ over $F$.
By a theorem of Shimura \cite{shi}, the Fourier coefficients $a_{\p}(f)$ of $f$, where $\p$ is a prime of $F$, generate a number field $E_f$.

By the work of Ohta, Carayol, Blasius-Rogawski, Wiles and Taylor \cite{ta1} and the local Langlands correspondence for $\GL_2$ (see \cite{car}), we can associate to $f$ a 2-dimensional \emph{strictly compatible system} of Galois representations $\rho_f = (\rho_{f,\iota})$ of $G_F$. Specifically, following Khare and Wintenberger \cite{kw1} $\rho_f$ consists of the data of:
\begin{enumerate}
\item For each rational prime $\ell$ and each embedding $\iota=\iota_{\ell}:E_f \hookrightarrow \overline{\Q}_{\ell}$ a continuous semisimple representation
\[
\rho_{f,\iota} : G_F \rightarrow \GL_2(\overline{\Q}_{\ell}),
\]
\item For each prime $\q$ of $F$, a Frobenius semisimple Weil-Deligne representation $r_{\q}$ with values in $\GL_2(E_f)$ such that:
\begin{enumerate}
\item $r_\q$ is unramified for all $\q$ outside a finite set, 
\item for each rational prime $\ell$, for each prime $\q \nmid \ell$ and for each $\iota: E_f \hookrightarrow \overline{\Q}_{\ell}$, the Frobenius semisimple Weil-Deligne representation associated to $\rho_{f,\iota}|_{D_{\q}}$ is conjugated to $r_{\q}$ via the embedding $\iota$.
\end{enumerate}
\item and a condition for the case where $\q|\ell$ not used in this work.
\end{enumerate}
The compatible system $\rho_f=(\rho_{f,\iota})$ is associated to $f$ in the sense that for each prime $\q \nmid \n \ell$, the characteristic polynomial of $\rho_{f,\iota}(\Frob_{\q})$ is
\[
X^2 - \iota(a_{\q}(f))X+ \iota( \chi (\q)N_{F/\Q}(\q))
\]

Now we introduce a description of the compatible system $\rho_f$ similar to what we saw in the previous section. Let $\iota:E_f \hookrightarrow \overline{\Q}_{\ell}$ be an embedding. Denote by
$E_{\iota}$ the closure of $\iota(E_f)$ and by $\mathcal{O}_{\iota}$ the closure of $\iota(\mathcal{O}_{E_f})$ in $\overline{\Q}_{\ell}$.
Let $(\pi)$ be the maximal ideal of the local ring
$\mathcal{O}_\iota$. Then $\Lambda := \mathcal{O}_{E_f} \cap \iota^{-1}((\pi)) $ is a maximal ideal of $\mathcal{O}_{E_f}$ above $\ell$ and $E_{\iota}$ can be identified with $E_{f,\Lambda}$, the completion of $E_f$ at $\Lambda$ with ring of integers $\mathcal{O}_{f,\Lambda}=(\mathcal{O}_{E_f})_{\Lambda}$. Therefore,
we can identify $\rho_{f,\iota}$ with the $\Lambda$-adic representation
\[
\rho_{f,\Lambda} : G_F \rightarrow \GL_2(\mathcal{O}_{f,\Lambda})
\]
More precisely, the composition of $\rho_{f,\Lambda}$ with the natural inclusion $\mathcal{O}_{f,\Lambda}\hookrightarrow \overline{\Q}_{\ell}$ equals $\rho_{f,\iota}$.
Moreover, $\rho_{f,\Lambda}$ is absolutely irreducible, totally odd and unramified outside $\n \ell$.

Let $\kappa (E_{f,\Lambda}) = \mathcal{O}_{E_f} / \Lambda = \F_{\Lambda}$ be the residual field of $E_{f,\Lambda}$. By taking a Galois stable $\mathcal{O}_{E_f}$-lattice, we define the mod $\ell$
representation
\[
\overline{\rho}_{f,\Lambda}: G_F \rightarrow \GL_2(\F_{\Lambda}),
\]
whose semi-simplification is independent of the particular choice of a lattice. Recall that, according to the first section, $\overline{\rho}_{f,\Lambda}^{\proj}$ denotes the projective quotient of $\overline{\rho}_{f,\Lambda}$, i.e.,
$\overline{\rho}_{f,\Lambda}$ composed with the natural projection $\GL_2(\F_{\Lambda}) \rightarrow \PGL_2(\F_{\Lambda})$.

\begin{theo}\label{did}
Let $\rho_f=(\rho_{f,\Lambda})$ be a compatible system associated  to a Hilbert modular newform  $f\in S_k(\n, \psi)$ without complex multiplication. Let $K_{\rho_f}=K_f$ be the projective field of definition of $\rho_f$, $\lambda = \Lambda \cap K_f$ and $\kappa(K_f)= \mathcal{O}_{K_f}/\lambda = \F_{\lambda}$. Then for almost all $\Lambda$ the image $\overline{\rho}^{\proj}_{f, \Lambda} (G_F)$ is either
 $\PSL_2 (\F_{\lambda})$ or $\PGL_2(\F_{\lambda})$.
\end{theo}
\begin{proof} By a result of Taylor (proposition 1.5, \cite{ty2}) $\overline{\rho}_{f,\Lambda}$ is absolutely irreducible for almost all $\Lambda$.
Then, the result follows directly from the work of Dimitrov (proposition 3.8, \cite{dim}) and Proposition \ref{chi}. 
\end{proof}

\begin{defi}\label{exep}
We say that a prime $\Lambda$ of $E_f$ is  \emph{nonexceptional} if $\overline{\rho}^{\proj}_{f,\Lambda} (G_F)$  is non-solvable and isomophic to $\PSL_2 (\F_{\ell^s})$ or to $\PGL_2(\F_{\ell^s})$ for some $s>0$.
\end{defi}

In particular, if we keep the assumptions of theorem \ref{did} we have only a finite number of exceptional primes. Then, according to the classification of finite subgroups of  $\PGL_2(\overline{\F}_{\ell})$, one has for each exceptional prime that $\overline{\rho}^{\proj}_{f,\Lambda} (G_F)$ is, up to semi-simplification, either an abelian group, a dihedral group, $A_4$, $S_4$ or $A_5$.


\section{Tamely dihedral representations}
\label{sec:4}

In this section we extend the definition of tamely dihedral representation of \cite{diu} to totally real fields in order to exclude complex multiplication and inner twists.

Let $F$, $E$ be  number fields, $\q$ be a prime of $F$ with residual characteristic $q$ and $F_{\q}$ be the completion of $F$ at $\q$. Recall that a 2-dimensional Weil-Deligne representation of $F_{\q}$ with values in $E$ can be described as a pair $(r,N)$, where $r:
W_{\q} \rightarrow \GL_2(E)$ is a continuous representation of the Weil group $W_{\q}$ of $F_{\q}$ for the discrete topology on $\GL_2(E)$ and $N$ is a nilpotent endomorphism of $E^2$ satisfying a 
commutativity relation with $r$ (see \cite{tat}). We denote by $F_{\q^2}$ the unique unramified degree 2 extension of $F_{\q}$ and by $W_{\q^2}$ the Weil group of $F_{\q^2}$.

\begin{defi}\label{mch}
Let $F$ a totally real field, $q$ a rational prime which is completely split in the Hilbert class field of $F$ and $\q$ a prime of $F$ above $q$. A 2-dimensional Weil-Deligne representation $r_{\q} = (r, N)$ of $F_{\q}$ with values in $E$ is called \emph{tamely dihedral of order }$n$ if $N=0$ and there is a tame character 
\[
\varphi:W_{\q^2} \rightarrow E^{\times}
\] 
whose restriction to the inertia group $I_{\q}$ is of niveau 2 (i.e. it factors over $\F_{q^2}^\times$ and not over $\F_q^\times$) and of order $n>2$ such that 
\[
r \cong \Ind ^{W_{\q}}_{W_{\q^2}}(\varphi).
\]
We say that a Hilbert modular newform $f$ is \emph{tamely dihedral of order} $n$ \emph{at the prime} $\q$ if the Weil-Deligne representation $r_{\q}$  associated to the restriction to $D_\q$ of the compatible system $\rho_f = (\rho_{f,\iota})$ is tamely dihedral of order $n$.
\end{defi}

Henceforth, when we talk about the notion of tamely dihedral at a prime $\q$ of $F$, we will assume that $\q$ divides a rational prime $q$ which is completely split in the Hilbert class field of $F$.

If the compatible system $\rho_f = (\rho_{f,\iota})$ is tamely dihedral of order $n$ at $\q$, then for all $\iota : E_f \rightarrow \overline{\Q}_{\ell}$ with $\ell \neq q$, the restriction of $\rho_{f,\iota}$ to $D_{\q}$ is of the form 
\[
\Ind^{F_{\q}}_{F_{\q^2}}(\iota \circ \varphi):= \Ind^{\Gal(\overline{F}_{\q}/F_{\q})} _{\Gal(\overline{F}_{\q}/F_{\q^2})}(\iota \circ \varphi).
\]

Let $\overline{\varphi}_{\Lambda}$ be the reduction of $\varphi$ modulo $\Lambda$ which is a character of the same order as $\varphi$. If $\ell$ and $n$ are relatively prime, then
\[
\overline{\rho}_{f,\Lambda} |_{D_{\q}} = \Ind^{F_{\q}}_{F_{\q^2}}(\overline{\varphi}_{\Lambda}).
\]
Moreover if $n=p^{r}$ for some odd rational prime $p$, then $N_{F/\Q}(\q) \equiv -1 \mod p$, since the character is of niveau $2$.

The next Lemma illustrates how we can avoid the "small" exceptional primes of Galois representations, by assuming that its restriction to $W_\q$ is a tamely dihedral for some appropriate prime ideal $\q$ (i.e. with certain local ramification behavior).

\begin{lemm}\label{ejem}
Let $F$ be a totally real field and $p$, $q$, $\ell$ be distinct odd rational primes.
Let $\q$ be a prime of $F$ above $q$, $\n$ be an ideal of $F$ such that $N_{F/\Q}(\n)$ is relatively prime to $pq$, and $\p_1, \ldots, \p_m $ be the primes with residual characteristic different from $q$ and smaller than or equal to the maximum of $\ell$ and the greatest prime divisor of $N_{F/\Q}(\n)$. 
Let $\rho:G_F \rightarrow \GL_2(\overline{\Q}_\ell)$ be a Galois representation of conductor $\mathfrak{n}$ such that its restriction to $W_\q$ is tamely dihedral of order $p$ at $\q$.
Assume that $\q$ is split in $F(\p_1, \ldots, \p_m)$, and that $p^r$ is unramified in $F$ and greater than the maximum of $5$, $\ell$ and the greatest prime divisor of $N_{F/\Q}(\n)$. 
Then the image of $\overline{\rho}^{\proj}$ is $\PSL_2(\F_{\ell^s})$ or $\PGL_2(\F_{\ell^s})$ for some $s>0$.
\end{lemm}

\begin{proof}
By definition $\overline{\rho} \vert _{I_\q}$ is of the form 
$\left(
  \begin{array}{cc}
    \varphi & 0 \\
    0      & \varphi^q \\
  \end{array}
\right)$, where $\varphi$ is a character of $I_\q$ of order a power of $p \vert q+1$. Then, as $p$ does not divide $q-1$, $\overline{\rho} \vert _{D_\q}$ is irreducible and then so is $\overline{\rho}$. As $p$ is greater than $5$ we have that the projective image can not be $A_4$, $S_4$ or $A_5$.

Now, suppose that the projective image is a dihedral group, i.e. $\overline{\rho}^{\proj} \cong \Ind_{K}^{F}(\alpha)$ for some character $\alpha$ of $\Gal (\overline{F}/K)$, where $K$ is a quadratic extension of $F$. From the ramification of $\overline{\rho}$ we know that $K \subseteq F(\q,\p_1, \ldots, \p_m)$ (because the primes above $\ell$ are contained in $\{\p_1, \ldots, \p_m \}$). As $\ell$ is different from $p$ and $q$ we have that 
\[
\overline{\rho}^{\proj} |_{D_{\q}} \cong \Ind_{F_{\q^2}}^{F_{\q}}(\varphi) \cong \Ind_{K_{\mathfrak{Q}}}^{F_{\q}}(\alpha)
\] 
for some prime $\mathfrak{Q}$ of $K$ above $q$, where $\varphi$ is a niveau 2 character of order $p^r$. From this we have that, if $K$ were ramified at $\q$, then $\overline{\rho}^{\proj}(I_{\q})$ would have even order, but it has order a power of $p$, then the field $K$ is unramified at $\q$. Thus $K \subseteq F(\p_1, \ldots, \p_m)$ and we conclude from the assumptions that $\q$ is split in $K$, which is a contradiction by the irreducibility  of $\overline{\rho}_{f,\Lambda}|_{D_{\q}}$. 
Then according to Dickson's classification the image of $\overline{\rho}^{\proj}$ is $\PSL_2(\F_{\ell^s})$ or $\PGL_2(\F_{\ell^s})$ for some $s>0$.

\end{proof}

More results of this kind will be introduced in section $7$. Now we will show some results similar to those of Section 4 of \cite{diu}, that we will use later in order to exclude nontrivial inner twists.

\begin{lemm}\label{44}
Let $K$ be a topological field and $F$ be a totally real field. Let $\q$ be a prime of $F$, $\epsilon : G_{F_{\q}} \rightarrow K^{\times}$ be a character
 and $\rho: G_{F_{\q}} \rightarrow \GL_2(K)$ be a representation. If the conductors of $\rho$ and of $\rho \otimes \epsilon$ both divide $\q$, then $\epsilon$ or $\epsilon \det (\rho)$ is unramified.
\end{lemm}
\begin{proof}
By the definition of the conductor, $\rho | _{I_{\q}}$ is of the form $\left(
\begin{array}{cc}
        1 & * \\
        0 & \delta \\
\end{array}
\right)$
where $\delta = \det (\rho)|_{I_{\q}}$. Consequently,  $\rho \otimes \epsilon|_{I_{\q}}$ looks like $\left(
\begin{array}{cc}
         \epsilon & * \\
         0 & \epsilon \delta \\
\end{array}
\right).$ Again, by the definition of the conductor, either $\epsilon|_{I_{\q}}$ is trivial or $\epsilon \delta |_{I_{\q}}$ is. 
\end{proof}

\begin{lemm}\label{42}
Let $K$ be a topological field and $F$ a totally real  field. Let $q$ be a rational prime which is completely split in $F$, $\q$ a prime of $F$ above $q$ and $n>2$ an integer relatively prime to $q(q-1)$. Let $\epsilon : G_{F_{\q}} \rightarrow K^{\times}$ and $\varphi, \varphi': \Gal(\overline{F}_{\q}/F_{\q^2}) \rightarrow K^{\times}$ be characters. Assume that $\varphi$ and $\varphi'$ are both of order $n$. If
\[
\Ind^{F_{\q}}_{F_{\q^2}}(\varphi) \cong \Ind^{F_{\q}}_{F_{\q^2}}(\varphi ') \otimes \epsilon ,
\]
then $\epsilon$ is unramified.
\end{lemm}
\begin{proof}
Note that the order of 
$\epsilon|_{\Gal(\overline{F}_{\q}/F_{\q^2})}$ divides $n$. If $\epsilon$ were ramified, the order of $\epsilon|_{I_{\q}}$ would divide $q-1$ times a power of $q$. But this contradicts the fact that $n$ is relatively prime to $q(q-1)$. 
\end{proof}

\begin{lemm}\label{41}
Let $f \in S_k(\n, \psi)$ be a Hilbert modular newform, $\iota: E_f \rightarrow \overline{\Q}_{\ell}$ and  $\rho_f = (\rho_{f,\iota})$ a compatible system associated to $f$. Then, for all inner twists
 $(\gamma, \epsilon) \in \mathcal{G}_{\rho_f}$ one has
\[
\rho_{f,\iota} \otimes \epsilon \cong \rho_{f,\iota \circ \gamma}.
\]
\end{lemm}
\begin{proof}
We know that the traces of any Frobenius element at any unramified prime $\p$ are equal: \[
\Tr((\rho_{f,\iota} \otimes \epsilon)(\Frob_{\p})) = \iota(a_{\p}(f) \epsilon(\Frob_{\p})) = \iota(\gamma (a_{ \p}(f))) = \Tr(\rho_{f, \iota \circ \gamma} (\Frob_{\p})),
\]
from which the result follows. 
\end{proof}

Note that when $\gamma$ is trivial, we are covering the complex multiplication case.

\begin{theo}\label{45}
Let $f \in S_k(\n, \psi)$ be a Hilbert modular newform.
\begin{enumerate}
\item Let $\q$ be a prime of $F$ such that $\q \parallel \n$ and assume that $\psi$ is unramified at $\q$. Then any inner twist of $f$ is unramified at $\q$.
\item Let $\q$ be a prime of $F$ such that $\q^2 \parallel \n$ and $f$ is tamely dihedral at $\q$ of odd order $n > 2$ such that $n$ is relatively prime to
$q(q-1)$. Then any inner twist of $f$ is unramified at $\q$.
\end{enumerate}
\end{theo}
\begin{proof}
 $(i)$ By Lemma $\ref{41}$ the conductors at $\q$ of $\rho_{f,\iota}$ and $\rho_{f,\iota \circ \gamma}$ both divide $\q$. Then, from Lemma $\ref{44}$ we have that $\gamma$ is unramified at $\q$, since the determinant of the representation is unramified at $\q$.

$(ii)$ If $r_{\q}$ is tamely dihedral of order $n$ at $\q$, $\rho_{f,\iota} |_{D_{\q}}$ is of the form
$\Ind^{F_{\q}}_{F_{\q^2}}(\iota \circ \varphi)$, and similarly for $\rho_{f, \iota \circ \gamma}$. Then, by Lemma \ref{41} and Lemma \ref{42} $\gamma$ is unramified at $\q$. 
\end{proof}

\begin{coro}\label{46}
Let $f \in S_k(\n)$ be a Hilbert modular newform over a totally real field $F$ with odd class number such that for every prime $\q | \n$,
\begin{enumerate}
\item $\q \parallel \n$ or
\item $\q^2 \parallel \n$ and $f$ is tamely dihedral at $\q$ of order $n > 2$ such that $(n,q(q-1))=1$.
\end{enumerate}
Then $f$ does not have nontrivial inner twists no complex multiplication.
\end{coro}
\begin{proof}
By Theorem \ref{45} any inner twist is everywhere unramified then these are characters of the Galois group $G:=\Gal(H/F)$ of the Hilbert class field $H$ of $F$. Moreover, as the central character of $f$ is trivial, the field of definition $E_f$ of $f$ is totally real and any inner twist nontrivial is necessarily quadratic.

On the other hand, it is well known that the character group $\hat{G}$ of a finite abelian group $G$ is isomorphic to the original group $G$. Consequently, as $h_F$ is odd, we have that $\hat{G}$ does not have elements of order $2$. Therefore, $f$ does not have nontrivial inner twists. The same happens for complex multiplication. 
\end{proof}

\begin{rema}\label{cin}
When the class number of $F$ is even we may have nontrivial inner twists. This follows from the fact that any finite abelian group is a direct sum of cyclic groups. Then if $h_F$ is even, the Galois group $G$ of the Hilbert class field of $F$ has at least one cyclic group $C$ of even order as direct summand. Thus $C$ has a character of order $2$ which extends to a quadratic character of $G$ by sending $g \in G-C$ to 1.
Moreover, we have an upper bound for the number of nontrivial inner twists which is $2^{\nu_2(h_F)}-1$, where $\nu_2(\cdot)$ is the $2$-adic valuation. Note that we could have $2^{\nu_2(h_F)}-1$ nontrivial inner twists only when
$G \cong (\Z/2 \Z )^{\nu_2(h_F)} \oplus \Z/ m_1 \Z \oplus \cdots \oplus \Z/m_r \Z$ for some divisors $m_i$ of $h_F$.
\end{rema}


\section{Construction of tamely dihedral representations}
\label{sec:5}

In this section we provide a method to construct Hilbert modular newforms which are tamely dihedral at some prime via level raising theorems.

Let $F$ be a totally real field and $f \in S_k(\n, \psi)$ be a Hilbert modular newform over $F$ of level $\n$ and weight $k=\sum_{\sigma \in J_F} k_{\sigma} \sigma$. Let 
\[
\rho_{f,\iota_p} : G_F \rightarrow \GL_2(\overline{\Q}_{p}) 
\]
the $p$-adic Galois representation attached to $f$ as in Section \ref{sec:3}. If $\p$ is a prime of $F$ above $p$ it is well known that $\rho_{f,\iota_p} |_{G_{F_{\p}}}$ is de Rham with $\sigma$-Hodge-Tate weights $\{ m_{\sigma}, m_{\sigma} + k_{\sigma}-1 \}$, where $\sigma \in J_F$ is an embedding lying over $\p$ and $m_{\sigma}=\frac{k_0-k_{\sigma}}{2}$.
Moreover, if $p>k_0$ is unramified in $F$ and relatively prime to $\n$, then $\rho_{f,\iota_p}|_{G_{F_{\p}}}$ is crystalline (see \cite{bro}).

We say that a Galois representation $\rho:G_F \rightarrow \GL_2(\overline{\Q}_p)$ is \emph{modular} (of level $\n$ and weight $k$) if it is isomorphic to $\rho_{f,\iota_p}$ for some Hilbert modular newform $f\in S_{k}(\n, \psi)$ and some embedding $\iota_p: E_f \hookrightarrow \overline{\Q}_p$. We say that a Galois representation $\rho:G_F \rightarrow \GL_2(\overline{\Q}_{p})$ is \emph{geometric} if it is unramified outside of a finite set of primes of $F$ and if for each prime $\p$ above $p$, $\rho|_{G_{F_{\p}}}$ is de Rham.

The main ingredient in the modern proofs of level raising theorems is to have an appropriate modularity lifting theorem as follows.

\begin{theo}[MLT]\label{mlt}
Let $F$ be a totally real field and $p>3$ be a rational prime unramified in $F$. Let $E/\Q_p$  be a finite extension containing the images of all embeddings $F \hookrightarrow \overline{E}$.
Let $\rho,\rho_0: G_F \rightarrow \GL_2(\mathcal{O}_E)$ be two Galois representations such that $\overline{\rho} = \rho\mod \mathcal{P} = \rho_0\mod \mathcal{P}$ for the maximal ideal $\mathcal{P}$ of $\mathcal{O}_E$.
Assume that $\rho_0$ is modular and that $\rho$ is geometric. Assume furthermore that the following properties hold.
\begin{enumerate}
\item $\SL_2(\F_p) \subseteq Im (\overline{\rho})$.
\item For all $\p$ above $p$, $\rho|_{G_{F_{\p}}}$ and $\rho_0|_{G_{F_{\p}}}$ are crystalline.
\item For all $\sigma: F \hookrightarrow E$, the elements of $HT_{\sigma}(\rho)$ differ by at most $p-2$.
\item For all $\sigma:F \hookrightarrow E$, $HT_{\sigma}(\rho)=HT_{\sigma}(\rho_0)$, and contains two distinct elements.
 \end{enumerate}
Then $\rho$ is modular.
\end{theo}
\begin{proof}
The proof is given in Section 5 of \cite{gee}. 
\end{proof}

Now we are ready to state the level raising theorem that we need. This is well known to the experts, but we sketch the proof for lack of a reference.

\begin{theo}\label{lev}
Let $F$ be a totally real field and $E/\Q_p$ be a finite extension sufficiently large. Let $f \in S_k(\n, \psi)$ be a Hilbert modular newform and $p>k_0+1$ be a rational prime unramified in $F$ not dividing  $N_{F/\Q}(\n)$. Moreover, we assume that $\SL_2(\F_p) \subseteq Im(\overline{\rho}_{f,\iota_p})$.
Let $q$ be a rational prime which is completely split in the Hilbert class field of $F$ and $\q$ be a prime of $F$ above $q$ such that $\q \nmid \n$, $N\q \equiv -1 \mod p$ and $\Tr (\overline{\rho}_{f,\iota_p} (\Frob_{\q}))=0$.
Then there exists a Hilbert modular newform $g \in S_k(\n \q^2, \tilde{\psi})$, with $\tilde{\psi}$ having the same conductor as $\psi$, such that
$\overline{\rho}_{f, \iota_p} \cong \overline{\rho}_{g, \iota'_p}$ and $g$ is tamely dihedral of order $p^r$ for some $r>0$ at $\q$. 
\end{theo}
\begin{proof}[Sketch of proof]
Let $S$ be a finite set of places of $F$ consisting of the infinite places, the primes above $p$, the primes dividing $\n$ and the prime $\q$ given above. Let $\overline{\rho}=\rho_{f,\iota_p} \mod \mathcal{P}$, we want to construct a lift $\rho$ of $\overline{\rho}$ such that:
\begin{enumerate}
\item for all places of $S - \{ \q, \p|p \}$, $\rho$ has the same inertial types of $\rho_{f,\iota_p}$,
\item for all $\p$ above $p$, $\rho|_{G_{F_{\p}}}$ is cristalline and has the same Hodge-Tate weights that $\rho_{f,\iota_p}$ and
\item for $\q$, $\rho|_{D_{\q}}$ have supercuspidal inertial type.
\end{enumerate}

Now we will rephrase the problem in terms of universal Galois deformation rings. Indeed, the representation $\rho$ that we want, corresponds to a $\overline{\Q}_p$-point on an appropriate Galois deformation ring $R_{S}^{\mbox{\scriptsize univ}}$ given by choosing the inertial types and Fontaine-Laffaille condition as above. See Section 3 of \cite{ge1} and Section 10 of \cite{kw2} for the precise definition of Galois deformation ring of prescribed type.
Thus it is enough to check that $R_{S}^{\mbox{\scriptsize univ}}$ has a $\overline{\Q}_p$-point.

To prove this, by Proposition 2.2 of \cite{kw2}, it is enough prove that $\dim R_{S}^{\mbox{\scriptsize univ}} \geq 1$ and that $R_{S}^{\mbox{\scriptsize univ}}$ is finite over $\mathcal{O}_E$.
As the image of $\overline{\rho}$ is non-solvable we can conclude that $\delta$ in the formula of remark 5.2.3.a of \cite{boe} is 0, then from Theorem 5.4.1 of \cite{boe} we have that $\dim R_{S}^{\mbox{\scriptsize univ}} \geq 1$. On the other hand, by Section 4.22 of \cite{gee} and Lemma 2.2 of \cite{tayl} to prove that $R^{\mbox{\scriptsize univ}}_{S}$ is finite over $\mathcal{O}_E$, it is enough to show that $R_{S'}^{\mbox{\scriptsize univ}}$ is finite over $\mathcal{O}_E$, where $S'$ is a base change of $S$ as in Section 5.4 of $\cite{gee}$. Then after this base change we can write $R_{S'}^{\mbox{\scriptsize univ}} = R_{\varnothing}^{\mbox{\scriptsize univ}}$.

Therefore, the problem is reduced to showing that $R_{\varnothing}^{\mbox{\scriptsize univ}}$
is finite over $\mathcal{O}_E$. But this is proved in $\cite{gee}$ (see the proof of theorem 5.1). Then we have that the desired lift exists.
Moreover, as this lift satisfies all conditions of Theorem \ref{mlt} we have that $\rho$ is modular. Observe that from conditions on the lift, i)-iii), and compatibility with the local Langlands correspondence, the Hilbert modular newform $g$ corresponding to $\rho$ must be of level $\n\q^2$ and weight $k$ (see also Theorem 1.5 of \cite{jar}). Moreover, condition i) on the lift implies that the central character of $f$ and $g$ agree locally at any prime. 
To see that $g$ is tamely dihedral of order $p^r$ at $\q$ we can translate word by word the proof of Corollary 2.6 of \cite{wie}. 
\end{proof}

The following result shows that there is a set of primes $\q$ of $F$ with positive density to which we can apply Theorem \ref{lev}.

\begin{lemm}\label{53}
Let $\p_1, \ldots, \p_m$ be primes of $F$ and let $p$ be a rational prime unramified in $F$ such that $\p_i \nmid p$ for all $i=1, \ldots m$ and $p \equiv 1\mod 4$.
Let $ \overline{\rho}_p^{\proj} : G_F \rightarrow \PGL_2(\overline{\F}_p)$ be an odd Galois representation with image $\PSL_2(\F_{p^s})$ or $\PGL_2(\F_{p^s})$ such that the image of any complex conjugation is contained in $\PSL_2(\F_{p^s})$. Then, the set of primes $\q$ of $F$ such that
\begin{enumerate}
\item $N\q \equiv -1 \mod p$,
\item $\q$ is split in $F(\p_1, \ldots , \p_m)$ and
\item $\overline{\rho}_p^{\proj} (\Frob_{\q}) \sim \overline{\rho}_p^{\proj}(c)$, where $c$ is any complex conjugation,
\end{enumerate}
has a positive density.
\end{lemm}
\begin{proof}
As in \cite{diu}, the proof is adapted from Lemma 8.2 of Khare and Wintenberger \cite{kw1}. Let $K/F$ be such that $\Gal(\overline{F}/K) = \ker(\overline{\rho}_p^{\proj})$. Then $\Gal(K/F)$ is isomorphic either to $\PGL_2(\F_{p^s})$ or $\PSL_2(\F_{p^s})$. Let $L=K \cap F(\zeta_p)$. Note that $K$ and $F(\zeta_p)$ are linearly disjoint over $L$, and $L/F$ is an extension of degree at most 2 because  $\PSL_2(\F_{p^s})$ is an index 2 simple subgroup of $\PGL_2(\F_{p^s})$.
By assumption the image of any complex conjugation lies in $\Gal(K/L) \cong \PSL_2(\F_{p^s})$. Then, for linear disjointness, we may appeal to Chebotarev's Density Theorem to pick up a set of primes $\q$ of $F$ with positive density such that
$\overline{\rho}_p^{\proj} (\Frob_{\q}) \sim \overline{\rho}_p^{\proj}(c)$, $\q$ is split in $L/F$ and $N_{F/\Q}(\q) \equiv -1\mod p$.
Moreover, we can assume that $\q$ is split in $F(\p_1, \ldots, \p_m)$ without losing the positive density. 
\end{proof}

The next result shows that we can add more than one tamely dihedral prime to Hilbert modular newforms without affecting the local behavior of the other primes​​.

\begin{prop}\label{54}
Let $f \in S_k( \n )$ be a Hilbert modular newform over a totally real field $F$ with odd class number such that no prime divisor of $\n$ divides $2$ and for all $\mathfrak{l} | \n$ either $\mathfrak{l} \Vert \n$ or $\mathfrak{l}^2 \Vert \n$ and $f$ is tamely dihedral at $\mathfrak{l}$ of order $n_{\mathfrak{l}} > 2$. Let $\{ \p_1, \ldots , \p_m \}$ be any finite set of primes of $F$. Then for almost all primes $p \equiv 1 \mod 4$ unramified in $F$ there is a set $S$ of primes of $F$ with positive density which are split in $F(\p_1, \cdots ,\p_m)$ such that for all $\q \in S$ there is a Hilbert modular newform $g \in S_k(\n \q^2)$ which is tamely dihedral at $\q$ of order $p$ and for all $\mathfrak{l}^2 \Vert \n$, $g$ is tamely dihedral at $\mathfrak{l}$ of order $n_{\mathfrak{l}} > 2$.
\end{prop}
\begin{proof}
For $p$ we may choose any prime $p \equiv 1 \mod 4$ unramified in $F$ which is greater than $N_{F/\Q}(\n)$, greater than $k_0+1$, relatively prime to all $n_{\mathfrak{l}}$ and such that $\SL_2(\F_p)\subseteq Im(\overline{\rho}_{f,\iota_p})$ (It can be chosen due to Theorem \ref{did}).

As $-1$ is a square in $\F^{\times}_p$ (because $p \equiv 1 \mod 4$) and there are no nontrivial inner twists (by Corollary \ref{46}) any complex conjugation necessarily lies in $\PSL_2$. Then we can take as $S$ the subset of primes $\q$ of the set provided by Lemma \ref{53} such that $\q$ is over a rational prime $q$ that is completely split in the Hilbert class field of $F$ which has positive density by Chevotarev's Density Theorem.

For any $\q \in S$, Theorem \ref{lev} provides us a Hilbert modular newform $g \in S_k(\n \q^2, \psi)$ tamely dihedral at $\q$ of order $p^r>1$ such that 
\[
\overline{\rho}_{f, \iota_p} \cong \overline{\rho}_{g, \iota'_p}.
\] 
In fact, from this isomorphism, it follows that $r=1$ and that $\psi$ is trivial. The result now follows exactly as in Theorem 5.4.ii of  \cite{diu} by using the isomorphism mentioned above. 
\end{proof}


\section{Hilbert modular newforms without exceptional primes}
\label{sec:6}

Keeping the same notation as in the previous section we will construct families of Hilbert modular newforms without exceptional primes and without nontrivial inner twists.

\begin{prop}\label{61}
Let $p$, $q$, $t$, $u$ be distinct odd rational primes such that $p$ and $t$ are unramified in $F$ and $q$ and $u$ are completely split in the Hilbert class field of $F$. Let $\n$ be an ideal of $F$ relatively prime to $ptqu$ and $\q$, $\mathfrak{u}$ be primes of $F$ above $q$
and $u$ respectively. Let $f \in S_{k} ( \n \q^2 \mathfrak{u}^2)$ be a Hilbert modular newform of weight $k \in \Z[J_F]$ without complex multiplication which is tamely dihedral of order $p^r>5$ at $\q$ and
tamely dihedral of order $t^s>5$ at $\mathfrak{u}$. Let $\p_1, \ldots, \p_m $ be the primes with residual characteristic different from $q$ and $u$ and smaller than or equal to the maximum of $2k_0-1$ and the greatest prime divisor of $N_{F/\Q}(\n \mathfrak{d}_F)$.
Assume that $\q$ is split in $F(\mathfrak{u}, \p_1, \ldots, \p_m)$ and that $\mathfrak{u}$ is split in $F(\q, \p_1 ,\ldots, \p_m)$.
Then $f$ does not have exceptional primes, i.e. for every prime $\Lambda$ of $E_f$ the image of $\overline{\rho}^{\proj}_{f,\Lambda}$
is $\PSL_2(\F_{\ell^s})$ or $\PGL_2(\F_{\ell^s})$ for some $s>0$.
\end{prop}
\begin{proof}
Let $\Lambda$ be any prime of $E_f$ lying over $\ell$. As in Lemma \ref{ejem} the tamely dihedral behavior implies that $\overline{\rho}_{f,\Lambda}$ is irreducible. Because if $\ell \notin \{p,q \}$ then  $\overline{\rho}_{f,\Lambda}|_{D_{\q}}$ is irreducible and if $\ell \in \{p,q \}$ then $\ell \notin \{t,u\}$ and $\overline{\rho}_{f,\Lambda}|_{D_{\mathfrak{u}}}$ is irreducible.

Suppose that the projective image is a dihedral group,
i.e. $\overline{\rho}_{f,\Lambda}^{\proj} \cong \Ind_{K}^{F}(\alpha)$ for some character $\alpha$ of $\Gal (\overline{F}/K)$, where $K$ is a quadratic extension of $F$. From the ramification of $\overline{\rho}_{f,\Lambda}$ we know that $K \subseteq F(\ell, \q, \mathfrak{u},\p_1, \ldots, \p_m)$.
First, we assume that $\ell \notin \{ p, q\}$, then we have that
\[
\overline{\rho}_{f,\Lambda}^{\proj} |_{D_{\q}} \cong \Ind_{F_{\q^2}}^{F_{\q}}(\varphi) \cong \Ind_{K_{\mathfrak{Q}}}^{F_{\q}}(\alpha)
\]
for some prime $\mathfrak{Q}$ of $K$ above $q$, where $\varphi$ is a niveau 2 character of order $p^r>5$. From this we have that, if $K$ were ramified at $\q$, then $\overline{\rho}_{f,\Lambda}^{\proj} (I_{\q})$ would have even order, but it has order a power of $p$, then the field $K$ is unramified at $\q$. For the primes above $\ell$ we have two cases. If $\ell$ is greater than the maximum of $2k_0-1$ and the greatest prime divisor of $N_{F/\Q}(\n \mathfrak{d}_F)$ we have from Lemma 3.4 of \cite{dim} (whose proof works for any totally real field $F$ i.e. even for not necessarily Galois fields) that the field $K$ cannot ramify at the primes of $F$ above $\ell$, then $K \subseteq F(\mathfrak{u},\p_1, \ldots, \p_m)$. Thus, we conclude from the assumptions that $\q$ is split in $K$, which is a contradiction by the irreducibility  of $\overline{\rho}_{f,\Lambda}|_{D_{\q}}$. On the other hand, if $\ell$ is smaller than or equal to the maximum of $2k_0-1$ and the greatest prime divisor of $N_{F/\Q}(\n \mathfrak{d}_F)$ we have that the primes above $\ell$ are contained in the set $\{ \p_1, \ldots, \p_m \}$. Thus $K \subseteq F(\mathfrak{u},\p_1, \ldots, \p_m)$ and we obtain a contradiction as in the previous case.

Now if $\ell \in \{ p,q \}$, in particular $\ell  \notin \{ t, u \}$. Then, exchanging the roles $\q \leftrightarrow \mathfrak{u}$, $p \leftrightarrow t$ and $r \leftrightarrow s$, the same arguments again lead to a contradiction. Then, the image of $\overline{\rho}^{\proj}_{f,\Lambda}$ cannot be a dihedral group.

By the classification of the finite subgroups of $\PGL_2(\overline{\F}_{\ell})$, it remains to exclude $A_4$, $S_4$, $A_5$. But, the image of $\overline{\rho}_{f,\Lambda}^{\proj}$ cannot be any of these groups, since there is an element of order greater than $5$ in the projective image. 
\end{proof}

\begin{rema}\label{62}
In fact from Theorem \ref{did} we can conclude in the previous proposition that for almost all $\Lambda$ the image of $\overline{\rho}^{\proj}_{f,\Lambda}$ is $\PSL_2(\F_{\lambda})$ or $\PGL_2(\F_{\lambda})$.
Moreover, if $f$ has no nontrivial inner twists the image of $\overline{\rho}^{\proj}_{f,\Lambda}$ is $\PSL_2(\F_{\Lambda})$ or $\PGL_2(\F_{\Lambda})$ for almost all primes $\Lambda$ of $E_f$. On the other hand, for the finite set of primes not satisfying this property the image is also large enough because it contains an element of order $p$ or an element of order $t$ (see Lemma 3.1 of \cite{wie}).
\end{rema}

\begin{theo}\label{chido}
There exist families of Hilbert modular newforms $(f_n)_{n\in \N}$ of weight $k$ and trivial central character over a totally real field $F$ with odd class number and without nontrivial inner twists and without complex multiplication such that
\begin{enumerate}
\item for all $n$, all primes $\Lambda_n$ of $E_{f_n}$ are nonexceptional and
\item for a fixed rational prime $\ell$, the size of the image of $\overline{\rho}^{\proj}_{f_n, \Lambda_n}$ is unbounded for running $n$.
\end{enumerate}
\end{theo}

\begin{proof}
Let $f \in S_k(\n)$ of squarefree level $\n$. Since the class number of $F$ is odd $f$ does not have any nontrivial inner twist nor complex multiplication by Corollary \ref{46}.

Let $\{ \p_1, \ldots, \p_m \}$ be the set of primes with norm smaller than or equal to the maximum of  $2k_0-1$ and the greatest prime divisor of $N_{F/\Q}(\n \mathfrak{d}_F)$. Let $B>0$ be any bound and $p$ be any prime greater than $B$ provided  by Proposition \ref{54} applied to $f$ and the set $\{ \p_1, \ldots , \p_m \}$, so that we get $g \in S_k(\n \q^2)$ which is tamely dihedral at $\q$ of order $p$ and which does not have any nontrivial inner twist nor complex multiplication (by Corollary \ref{46}), for some choice of $\q$.

Now applying Proposition \ref{54} to $g$ and the set of primes with norm smaller than or equal to the maximum of  $2k_0-1$ and the greatest prime divisor of $N_{F/\Q}(\n \q^2 \mathfrak{d}_F )$ we obtain a prime $t>B$ different from $p$ and a Hilbert modular newform $h \in S_k(\n \q^2 \mathfrak{u}^2)$ which is tamely
dihedral at $\mathfrak{u}$ of order $t$ and which again does not have any nontrivial inner twist nor complex multiplication (by Corollary \ref{46}), for some choice of $\mathfrak{u}$. Finally, by Proposition \ref{61}, $h$ does not have any exceptional primes.  

We obtain the family $(f_n)_{n \in \N}$ by increasing the bound $B$ step by step, so that elements of greater and greater projective orders appear in the images of inertia groups. 
\end{proof}

We say that a weight $k \in \Z[J_F]$ is \emph{non-induced}, if there do not exist a strict subfield $F'$ of $F$ and a weight $k' \in \Z[J_{F'}]$ such that for each $\sigma \in J_F$, $k_{\sigma} = k'_{(\sigma |_{F'})}$. Note that if $k$ is non-induced then $k$ is not parallel. Moreover, these two conditions are equivalent if the degree $d$ of $F$ is a prime number (see Remark IV.6.3.ii of \cite{dim1}). Moreover, if we assume that $F$ is a Galois field of odd degree this assumption excludes the case where $f$ comes from a base change of a strict subfield of $F$ (see Corollary 3.18 of \cite{dim}). 

\begin{rema}
If we assume, in the previous construction, that $F$ is a Galois field of odd degree and $f$ has a non-induced weight $k$ then the family $(f_n)_{n \in \N}$ of Hilbert modular newforms in Theorem \ref{chido} is such that each $f_n$ does not come from a base change of a strict subfield of $F$ since $g$ and $h$ have the same weight as $f$ and thus, by construction, $g$ and $h$ also do not come from a base change of a strict subfield of $F$.
\end{rema}


\section{A construction via inertial types}
\label{sec:07}

In this section we will explain another method to construct Hilbert modular newforms which are tamely dihedral. This method depends on the main result of \cite{jwe}.

Let $F_\p$ be a finite extension of $\Q_p$, where $F$ is a totally real field and $\p$ is a prime of $F$ above $p$. The local Langlands correspondence establishes a bijection $\pi_\p \mapsto \mbox{rec}(\pi_\p)$ between the set Irr$(\GL_2(F_\p))$ of isomorphism classes of complex-valued irreducible admissible representations of $\GL_2(F_\p)$ and the set WD$(W_\p)$ of isomorphism classes of two-dimensional Frobenius-semisimple Weil-Deligne representations of $F_\p$ preserving $L$-functions and epsilon factors.
In \cite{hen} it is shown that if $\pi_\p \in \mbox{Irr}(\GL_2(F_\p))$, then $\pi_\p \vert _{\GL_2(\mathcal{O}_{F_\p})}$ contains an irreducible finite-dimensional subspace $\tau(\pi_\p)=\tau_\p$ of $\GL_2(\mathcal{O}_{F_\p})$, called the \emph{(local) inertia type} of $\pi_\p$, which characterizes the restriction of $\mbox{rec}(\pi_\p)$ to the inertia group of $\mathcal{O}_{F_\p}$. 

We will denote by $\mathcal{T}(F_\p)$ the set of isomorphism classes of representations of $\GL_2(\mathcal{O}_{F_\p})$ which arise as inertial types for members of Irr$(\GL_2(F_\p))$. 
We say that $\tau_\p$ is a supercuspidal (resp. special, principal series) type if $\pi_\p$ is supercuspidal (resp. special, principal series). We define the quantity
\[
d(\tau_\p) = (-1)^{\alpha} q^{\beta}(\gamma q +1)(\delta q - 1)
\]
where $q$ is the cardinality of the residue field of $F_\p$ and the values of $\alpha, \beta, \gamma, \delta \in \Z_{\geq 0}$ are determined by the type of $\pi_\p$ (see Section 2.1 of \cite{jwe} for details).

On the other hand, let $k \geq 2$ and $w$ be two integers of the same parity and let $\mathcal{D}_{k,w}$ be the essentially discrete series representation of $\GL_2(\R)$ as in paragraph 0.2 of \cite{car}. Then the central character of $\mathcal{D}_{k,w}$ is $t \mapsto t^{-w}$. We will denote by $\mathcal{T}(\R)$ the set of all such representations $\mathcal{D}_{k,w}$ and we simply define $\tau(\mathcal{D}_{k,w}) = \mathcal{D}_{k,w}$. We also define $d(\mathcal{D}_{k,w})=k-1$.

Let $S_{f}$ be the set of primes of $F$ and recall that $J_F$ denote the set of all embeddings of $F$ into $\overline{\Q} \subseteq \C$. Given a cuspidal automorphic representation $ \pi = \pi_\infty \otimes \pi_f$ (where $\pi_\infty = \otimes_{\sigma \in J_F} \pi_\sigma $ and $\pi_f = \otimes_{\p \in S_{f}} \pi_\p$) of $\GL_2(\A_F)$ arising from a Hilbert modular form over $F$ of weight $k = \sum_{\sigma \in J_F} k_\sigma \sigma \in \Z [J_F]$, we can associate to it, the representation 
\[
\tau(\pi) = \bigotimes \limits_{\sigma \in J_F} \tau(\pi_\sigma) \bigotimes \limits_{\p \in S_{\tiny fin}} \tau(\pi_\p) 
\] 
of $\GL_2( \hat{\mathcal{O}}_F \times (F \otimes \R))$. In particular if $\pi_\sigma \cong \mathcal{D}_{k_\sigma,w_\sigma}$, then $w_\sigma = w_\sigma'$ for all $\sigma, \sigma' \in J_F$ and the integers $k_\sigma$ and $w_\sigma$  all have the same parity.  
Moreover, we have that $\tau(\pi_\p)$ is the trivial representation for all primes of $F$ not dividing the level of $\pi$ and the central character of $\pi$ is an algebraic Hecke character of $\A^*_F$ whose restriction to $\mathcal{O}_{F_\p}$ (resp. $F_\tau^* \cong \R$) is equal to the central character of $\tau(\pi_\p)$ for all $\p \in S_{f}$ (resp. of $\tau(\pi_\sigma)$ for all $\sigma \in J_F$).

Accordingly, we define the set $\mathcal{T}(F)$ of \emph{global inertial types} to consist of the collections $\tau = (\tau_v)_{v\in J_F \cup S_{f}}$ satisfying the conditions:
\begin{enumerate}
\item For all but finitely many $v$, $\tau_v$ is the trivial representation.
\item There exists an algebraic Hecke character of $\A_F$ whose component at each $v$ agrees with the central character of $\tau_v$. 
\end{enumerate}

For each $\tau \in \mathcal{T}(F)$ we define
\[
d(\tau) = \prod_{v\in J_F \cup S_{f}} d(\tau_v),
\]
the product makes sense because all but finitely many factors are 1.

Clearly, if $\pi$ is a cuspidal automorphic representation of $\GL_2(\A_F)$ arising from a Hilbert modular form over $F$, $\tau(\pi)$ belongs to $\mathcal{T}(F)$.  Then a natural question is: given an arbitrary global inertial type $\tau \in \mathcal{T}(F)$, when does this type come from a Hilbert modular form? The answer, provided by Weinstein, is as follows.

Let $H(\tau)$ be the set of cuspidal automorphic representations $\pi$ of $\GL_2(\A_F)$ arising from a Hilbert modular form over $F$ for which $\tau(\pi)=\tau$. The main result of \cite{jwe} establishes that
\[
\# H(\tau) = 2^{1-(\#J_F)} | \zeta_F(-1) | h_F d(\tau) + O(2^{\nu(\tau)}),
\]
where $\zeta_F(s)$ is the Dedekind zeta function for $F$ and $\nu(\tau)$ is the number of primes $\p$ where $\tau_\p$ is nontrivial. Finally, by comparing the quantity $d(\tau)$ with the error term $2^{\nu(\tau)}$, we can obtain the following result.

\begin{theo}[Weinstein]\label{wei}
Up to twisting by 1-dimensional characters, the set of global inertial types $\tau \in \mathcal{T}(F)$ for which $H(\tau)= \emptyset$ is finite.
\end{theo}
\begin{proof}
This is just Corolary 1.2 of \cite{jwe} 
\end{proof}
Let $p$, $q$, $t$, $u$ be distinct odd rational primes, such that $p$ and $t$ are unramified in $F$ and $q$ and $u$ are completely split in the Hilbert class field of $F$. Let $\n$ be an ideal  of $F$ squarefree and relatively prime to $pqtu$ and  $\q$ and $\mathfrak{u}$ be a primes of $F$ above $q$ and $u$ respectively. By Theorem $\ref{wei}$ we can ensure the existence of a Hilbert modular newform $f \in S_k(\n \q^2 \mathfrak{u}^2)$ with supercuspidal types in $\q$ and $\mathfrak{u}$ for some choice of a prime $\q$ (or equivalently of a prime $\mathfrak{u}$) large enough (because in this case we have that $d(\tau_{\q}) = q(q-1)$ and $d(\tau_{\mathfrak{u}}) = u(u-1)$). We note that the hypothesis of making a choice of a prime $\q$ or $\mathfrak{u}$ large enough can be avoided if we have large enough weights or enough primes ramified.  Thus, for an appropriate choice, $f$ is tamely dihedral of order $p^r>5$ at $\q$ and tamely dihedral of order $t^s>5$ at $\mathfrak{u}$ (see the proof of Corollary 2.6 of \cite{wie}). Then we have the following general result.

\begin{theo}
For any totally real field $F$ and any weight $k \in \Z[J_F]$ there exist families of Hilbert modular newforms $(f_n)_{n\in \N}$ over $F$ of weight $k$, trivial central character and without complex multiplication such that
\begin{enumerate}
\item for all $n$, all primes $\Lambda_n$ of $E_{f_n}$ are nonexceptional and
\item for a rational fixed prime $\ell$, the size of the image of $\overline{\rho}^{\proj}_{f_n, \Lambda_n}$ is unbounded for running $n$.
\end{enumerate}
Moreover, if $F$ is a Galois field of odd degree then the elements of $(f_n)_{n\in \N}$ do not come from a base change of a strict subfield of $F$ for all $n$.
\end{theo}
\begin{proof}
Let $B>0$ be some bound. Let $p$ and $t$ be rational primes (as above) greater than $B$. By choosing the prime $\q$ (or the prime $\mathfrak{u}$) of $F$ large enough (or alternatively by choosing an ideal $\n$ of $F$ with sufficient prime divisors) we have, from the previous discussion about Weinstein's result and Proposition \ref{61}, that for every weight $k \in \Z[J_F]$ there exists a Hilbert modular newform of weight $k$ without exceptional primes. 
Thus, by increasing the bound $B$ we obtain a family $(f_n)_{n\in \N}$ such that elements of greater and greater orders appear in the inertia images because the number of inner twists is bounded and depends only on the class number of $F$ (see remark \ref{cin}).

Finally, if $F$ is a Galois fields and $k \in \Z[J_F]$ is a non-induced weight we have, from Corollary 3.18 of $\cite{dim}$, that $f_n$ does not come from a base change of a strict subfield of $F$ for all $n$. 
\end{proof}


\end{document}